 \gdef\xxxmark{%
   \expandafter\ifx\csname @mpargs\endcsname\relax 
     \expandafter\ifx\csname @captype\endcsname\relax 
       \marginpar{xxx}
     \else
       xxx 
     \fi
   \else
     xxx 
   \fi}
 \gdef\xxx{\@ifnextchar[\xxx@lab\xxx@nolab}
 \long\gdef\xxx@lab[#1]#2{{\bf [\xxxmark #2 ---{\sc #1}]}}
 \long\gdef\xxx@nolab#1{{\bf [\xxxmark #1]}}
\let\realbfseries=\bfseries
\def\bfseries{\realbfseries\boldmath}
\newif\ifabstract
\newif\iffull
\let\epsilon=\varepsilon
\newsavebox{\theorembox}
\newsavebox{\factbox}
\newsavebox{\lemmabox}
\newsavebox{\remarkbox}
\newsavebox{\corollarybox}
\newsavebox{\propositionbox}
\newsavebox{\examplebox}
\newsavebox{\conjecturebox}
\newsavebox{\algbox}
\newsavebox{\qbox}
\newsavebox{\problembox}
\newsavebox{\definitionbox}
\newsavebox{\assumptionbox}
\newsavebox{\hypothesisbox}
\savebox{\theorembox}{\noindent\bf Theorem}
\savebox{\factbox}{\noindent\bf Fact}
\savebox{\lemmabox}{\noindent\bf Lemma}
\savebox{\remarkbox}{\noindent\bf Remark}
\savebox{\corollarybox}{\noindent\bf Corollary}
\savebox{\propositionbox}{\noindent\bf Proposition}
\savebox{\examplebox}{\noindent\bf Example}
\savebox{\conjecturebox}{\noindent\bf Conjecture}
\savebox{\algbox}{\noindent\bf Algorithm}
\savebox{\definitionbox}{\noindent\bf Definition}
\savebox{\problembox}{\noindent\bf Problem}
\savebox{\assumptionbox}{\noindent\bf Assumption}
\savebox{\hypothesisbox}{\noindent\bf Hypothesis}
\newtheorem{theorem}{\usebox{\theorembox}}
\newtheorem{lemma}[theorem]{\usebox{\lemmabox}}
\newtheorem{definition}{\usebox{\definitionbox}}
\newcommand{\qed}{\;\;\;\Box}
\newenvironment{proof}{\par{\bf Proof:}}{\(\qed\) \par}
\begin{document} 

\title{\LARGE{Finding an Integral vector in an Unknown Polyhedral Cone}}
\author{{\large{Ali~Kakhbod and Morteza Zadimoghaddam} \\
\normalsize{University of Michigan, Massachusetts Institute of Technology}\\
\normalsize{Email:  \tt{akakhbod@umich.edu},  \tt{morteza@mit.edu}}}}



\maketitle

\begin{abstract}
 We present an algorithm to find an integral vector  in the polyhedral cone  $\Gamma=\{X | \textbf{A}X \leq \textbf{0}\}$, without assuming the explicit knowledge of $\textbf{A}$. About the polyhedral cone, $\Gamma$,  it is  only given that, (i) the elements of \textbf{A} are in 
$\{-d,-d+1,\cdots,0,\cdots,d-1,d\}$, $d \in \mathbb{N}$,  and, (ii) $Y=[y(1),y(2),\cdots,y(n)]$ is a non-zero integral  solution to $\Gamma$. The proposed algorithm finds a non-zero integral vector in $\Gamma$ such that its maximum element is less than ${(2d)^{2^{n-1}-1}}/{2^{n-1}}$.
\\

 \textit{Index Terms}---  Integer programming, Algorithm.
\end{abstract}
\begin{section}{Introduction}
 Finding an integral vector\footnote{An integral vector is a vector with non-negative integer elements.} in the polyhedral cone $\Gamma=\{X | \textbf{A}X \leq \textbf{0}\}$, for a given/known matrix $\textbf{A}$, $\textbf{A} \in \mathbb{Z}^{n \times n}$, is a problem which has been considered in  great detail \cite{1,2,4,5}. 
 

In this paper, we consider the above-mentioned problem from another angle with a distinctly different assumption. Here, we  assume  there is no explicit knowledge about $\textbf{A}$ (i.e., \textbf{A} is unknown), but a non-zero integer solution of  $\Gamma$ is given. Under these assumptions, we can show that 
not only does there exist another integer solution of $\Gamma$ but also the maximum element of the obtained solution is less than $\frac{(2d)^{2^{n-1}-1}}{2^{n-1}},$ when elements of $\textbf{A}$ are in $\{-d,-d+1,\cdots,0,\cdots,d-1,d\}$\footnote{In other words, suppose that $Y=[y(1),y(2),\ldots,y(n)]$ is a given non-zero integer solution of $\Gamma$ such that $\max_{1 \leq i \leq n}y(i)>\frac{(2d)^{2^{n-1}-1}}{2^{n-1}}$. Then, without knowing $\textbf{A}$,  we can find another non-zero integer solution of $\Gamma$ such that its maximum element is less than $\frac{(2d)^{2^{n-1}-1}}{2^{n-1}}$.}, where $d \in \mathbb{N}$.
\\

The rest of the paper is organized as follows. In Section \ref{sec1} we present the main theorem of the paper. In Section \ref{sec2} we consider an example. In Section \ref{sec3} we present some applications of the algorithm.

 

\end{section}


\begin{section}{Main Theorem}
\label{sec1}

In this section, we present the main theorem of this paper.  The proof of this theorem is constructive and   along the proof  we present the algorithm that archives the desired properties. 

\begin{theorem}
\label{1}
Consider $\Gamma=\{X| \textbf{A}X\leq\textbf{0}\}$, with this knowledge that the elements of  $\textbf{A}$ are in $\{-d,-d+1,\cdots,0,\cdots,d-1,d\}$.  Assume that $Y=[y(1),y(2),\ldots,y(n)]$ is a given (arbitrary) non-zero integer solution of $\Gamma$. Then  there exists an integer solution $X=[x(1),x(2),\ldots,x(n)]$ which satisfies $\textbf{A}X\leq \textbf{0}$  and 
\begin{eqnarray}
\label{bound1}
\max_{1\leq i \leq n} x(i) \leq \frac{(2d)^{2^{n-1}-1}}{2^{n-1}}.
\end{eqnarray}
\end{theorem}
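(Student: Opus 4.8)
The plan is to reduce the dimension one variable at a time, so that the bound's tower-of-exponents shape in $n$ emerges from repeated squaring. Suppose we have a non-zero integer solution $Y$. The key observation is that the defining inequalities $\mathbf{A}X \le \mathbf{0}$ are scale-invariant in the cone sense, but more usefully, once we fix all but two coordinates, each constraint becomes a linear inequality in the two free coordinates with integer coefficients bounded by $d$ in absolute value. So the strategy is: work with two ``active'' coordinates at a time while treating the remaining ones as parameters, and show that we can replace the current solution by one whose largest coordinate is controlled by a quadratic function of the previous bound times $2d$.

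\begin{proof}[Sketch]
We induct on $n$. For $n=1$ the bound reads $\max x(i) \le 1$, and indeed if $y(1)>0$ solves $a\, y(1) \le 0$ for all entries $a\in\{-d,\dots,d\}$ of the (column) matrix, then every such $a$ must be $\le 0$, so $x(1)=1$ works.

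For the inductive step, assume the statement for $n-1$ with bound $B_{n-1} := (2d)^{2^{n-2}-1}/2^{n-2}$. Given a non-zero integer solution $Y=[y(1),\dots,y(n)]$ of the $n$-dimensional system, consider the last coordinate: I would like to ``eliminate'' $x(n)$ by projecting. Concretely, for each pair of rows of $\mathbf{A}$ — one with a positive coefficient $\alpha>0$ in column $n$ and one with a negative coefficient $-\beta<0$ — take the positive combination $\beta\cdot(\text{row}_1) + \alpha\cdot(\text{row}_2)$, which kills column $n$ and yields a valid inequality in the remaining $n-1$ variables; rows with a zero in column $n$ are kept as is. This is Fourier--Motzkin elimination, and crucially the new coefficients now lie in $\{-2d^2,\dots,2d^2\}$, i.e. the coefficient bound has gone from $d$ to $2d^2 = (2d)^2/2$. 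The truncated vector $[y(1),\dots,y(n-1)]$ solves this new system (it is non-zero unless $y(1)=\dots=y(n-1)=0$, a degenerate case I would handle separately by noting $[1,0,\dots,0]$ or a similar unit vector then works). Applying the induction hypothesis to the $(n-1)$-dimensional system with coefficient bound $d' = 2d^2$ produces an integer solution $[x(1),\dots,x(n-1)]$ with $\max_i x(i) \le (2d')^{2^{n-2}-1}/2^{n-2} = (2d)^{2^{n-1}-1}/2^{n-1} =: B_n$, matching \eqref{bound1}.

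It remains to recover a valid integer value for $x(n)$, small enough not to violate the bound. Each original row $j$ gives $\sum_{i<n} a_{ji} x(i) + a_{jn} x(n) \le 0$; for rows with $a_{jn}>0$ this is an upper bound $x(n) \le U_j$, for rows with $a_{jn}<0$ a lower bound $x(n)\ge L_j$, and Fourier--Motzkin guarantees $L_j \le U_k$ for all such $j,k$ (this is exactly why the projected inequalities are satisfied). So I choose $x(n) = \lceil \max_j L_j \rceil$ if any lower-bound rows exist, else $x(n)=0$; one checks $\max_j L_j \le d\, B_n / 1$-type estimates keep $x(n) \le B_n$ as well, using that all coefficients are at most $d$ and the $x(i)$ are non-negative and at most $B_n$. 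The main obstacle — and the step I would spend the most care on — is precisely this last bounding of $x(n)$ together with the non-degeneracy bookkeeping: ensuring the projected solution stays non-zero and that the rounding of $x(n)$ does not push it past $B_n$. If a cleaner route is available, one could instead keep rational solutions throughout and clear denominators only at the very end, trading the rounding argument for a denominator-tracking argument; either way the arithmetic of how $(2d)\mapsto(2d)^2/2$ under elimination is what produces the stated tower bound.
\end{proof}
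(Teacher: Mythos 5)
Your overall architecture --- eliminate one variable, square the coefficient bound via $d \mapsto 2d^2$ (which is exactly the paper's recurrence $\Upsilon_{j+1}=2\Upsilon_j^2$), recurse, and back-substitute --- is the same engine the paper uses, and your coefficient bookkeeping for the projected system is right. But the back-substitution step, which you yourself flag as the delicate one, has a genuine gap that your sketch does not close, and it is precisely where the paper does something different. First, the feasible interval $[\max_j L_j,\ \min_k U_k]$ for $x(n)$ is a nonempty interval whose endpoints are rationals with denominators up to $d$; it need not contain an integer, so setting $x(n)=\lceil \max_j L_j\rceil$ can overshoot $\min_k U_k$ and yield a vector that is not a solution at all. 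To stay integral you must rescale the whole vector by the relevant denominator, costing a further factor up to $d$. Second, and more seriously, the only a priori bound on $\max_j L_j$ is of the form $\frac{1}{|a_{jn}|}\sum_{i<n}|a_{ji}|\,x(i)\le d\,(n-1)\max_{i<n}x(i)$, which carries a factor of $n-1$ that is not in the budget. (Your identity $(2\cdot 2d^2)^{2^{n-2}-1}/2^{n-2}=(2d)^{2^{n-1}-1}/2^{n-1}$ is also off: the left side equals $(2d)^{2^{n-1}-2}/2^{n-2}$, i.e.\ your $B_n/d$, so induction leaves a slack factor of exactly $d$ --- not enough to absorb both the $(n-1)$ and the denominator-clearing.)

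The paper closes this hole by reversing the roles of the coordinates: it sorts $y(1)\le\cdots\le y(n)$, starts from $x(n)=1$, and assigns $x(n-1),\dots,x(1)$ in that order, each time choosing the new coordinate to be the \emph{minimum of its upper bounds}. Because $x(j)\le x(n)$ is itself one of those upper bounds (as $y(j)\le y(n)$ and the coefficients $\pm1$ are always admissible), the new coordinate never exceeds the current maximum, so the only growth per step is the single denominator $\gamma_j^*\le\Upsilon_j$ used to restore integrality; the resulting product $\prod_{j=1}^{n-1}\Upsilon_j$ is then exactly $(2d)^{2^{n-1}-1}/2^{n-1}$, with no stray factors of $n$ or $d$. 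One further difference worth noting: you pair up actual rows of $\textbf{A}$, whereas the paper replaces $\Gamma$ by the cone $\Lambda_Y$ of \emph{all} inequalities with suitably bounded integer coefficients that $Y$ satisfies. This is immaterial for the bare existence statement, but it is what lets the paper's construction run without knowing $\textbf{A}$, which is its stated point.
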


\begin{proof}
 Without loss of generality we assume that,  
$$y(1) \leq y(2) \leq \ldots \leq y(n).$$ 
We  consider the polyhedral cone $\Lambda_Y$, $\Lambda_Y \subset \Gamma$, which is defined by {\em all}
inequalities of the form 
\begin{displaymath}
\sum_{i=1}^{n} c_i x(i) \leq 0,
\end{displaymath}
where $c_i \in \{-d, \cdots,d\}$ for all $i$ and $\sum_{i=1}^{n} c_i y(i) \leq 0$.


We will describe a procedure to construct a solution  
$X \in \Lambda_Y$ that satisfies $$\max_{1 \leq i \leq n} x(i) \leq 
\frac{(2d)^{2^{n-1}-1}}{2^{n-1}}.$$  

\begin{definition}
\label{rho}
The sequence $\Upsilon_j$ is defined by the recurrence relation 
$\Upsilon_1=d, \; \Upsilon_j=2\Upsilon_{j-1}^2$.
\end{definition}
  Thus
\begin{displaymath}
\Upsilon_j= \frac{1}{2}(2d)^{2^{j-1}}. 
\end{displaymath}
For $j \in \{1, \ \dots , \ n-1 \}$, let $\Lambda_Y^j$ denote the 
polyhedral cone in $n+1-j$ dimensions which is defined by {\em all}
constraints of the form
\begin{displaymath}
\sum_{i=j}^{n} c_i x(i) \leq 0,
\end{displaymath}
where for all $i, \; c_i$ is an integer with $|c_i | \leq \Upsilon_j$,
and $\sum_{i=j}^{n} c_i y(i) \leq 0.$

Our procedure begins by setting $x(n)=1$.
For integers $j$ decreasing from $n-1$ down to $1$, we describe a way to
select and update the partial solutions, that is, in each iteration, say $n-j$, we derive a feasible solution with lower dimension called partial solution in that iteration and denote by $X^{(n-j)}$= $\{x(i): \; i \geq j \}
\in \Lambda_Y^{j}$.

For $j=n-1$ and $x(n)=1$, any {\em real} partial solutions 
$X^{(n-1)} \in \Lambda_Y^{n-1}$ is calculated by constraints of the form
\begin{equation}
\gamma_{n-1}^{u} x(n-1) \; \leq \; \gamma_{n}^u \label{eq:i1}
\end{equation}
or
\begin{equation}
\gamma_{n-1}^{l} x(n-1) \; \geq \; \gamma_{n}^l, \label{eq:i2}
\end{equation}
where 
$\gamma_{n-1}^{u}, \; \gamma_{n}^u, \; \gamma_{n-1}^{l},$ and
$\gamma_{n}^l$ are integers satisfying
\begin{eqnarray}
0 & \leq & \gamma_{n-1}^{u} \; \leq \; \Upsilon_{n-1} \\
1 & \leq & \gamma_{n}^{u} \; \leq \; \Upsilon_{n-1} \\
1 & \leq & \gamma_{n-1}^{l} \; \leq \; \Upsilon_{n-1} \\
0 & \leq & \gamma_{n}^{l} \; \leq \; \Upsilon_{n-1} \\
\gamma_{n-1}^{u} y(n-1) & \leq & \gamma_{n}^u y(n) \\
\gamma_{n-1}^{l} y(n-1) & \geq & \gamma_{n}^l y(n) 
\end{eqnarray}
We initially choose $x(n-1)$ to be a positive number that satisfies
all constraints in $\Lambda_Y^{^{n-1}}$ assuming that $x(n)=1$. 
More specifically, we choose positive integers 
$\gamma_{n-1}^{*} \leq \Upsilon_{n-1}$ and $\gamma_{n}^{*} \leq \Upsilon_{n-1}$ 
such that
\begin{displaymath}
\frac{\gamma_{n}^{*}}{\gamma_{n-1}^{*}} \; = \; x(n-1) \; \leq \; x(n)
\; = \; 1,
\end{displaymath}
and the constraints of $\Lambda_Y^{n-1}$ are satisfied.
We next multiply $x(n-1)$ and $x(n)$ by $\gamma_{n-1}^{*}$ to obtain
an {\em integral} partial solution satisfying
\begin{displaymath}
1 \; \leq x(n-1) \; \leq \; x(n) \; \leq \; \gamma_{n-1}^{*} 
\; \leq \; \Upsilon_{n-1} ,
\end{displaymath}
and this will be our initial partial solution when we begin to
consider $x(n-2)$.

For $j$ decreasing from $n-2$ down to $1$, suppose that we have an integral
partial solution $X^{(n-(j+1))^*}=\{x^*(i): \; i \geq j+1 \} \in \Lambda_Y^{j+1}$ with
$x^{*} (n) \leq \prod_{i=j+1}^{n-1} \Upsilon_i$.  We will use this partial
solution to construct an integral partial solution
$X^{(n-j)} \in \Lambda_Y^j$ with $x(n) \leq \prod_{i=j}^{n-1} \Upsilon_i$.  
We begin by setting $x(i)=x^{*} (i), \; j+1 \leq i \leq n$.
Assuming that this is a legitimate assignment, in order for {\em real}
partial solution $X^{(n-j)}$ to be an element of $\Lambda_Y^j$, we must have
that
\begin{displaymath}
\sum_{i=j}^{n} c_i x(i) \leq 0,
\end{displaymath}
where for all $i, \; c_i$ is an integer with $|c_i | \leq \Upsilon_j$,
and $\sum_{i=j}^{n} c_i y(i) \leq 0.$  
There are three cases to consider for $c_j$:
\begin{enumerate}
\item If $c_j = 0$, then since $X^{(n-(j+1))^*} \in \Lambda_Y^{j+1}$ it follows that
$\sum_{i=j+1}^{n} c_i x(i) \leq 0$ holds when $c_i$ is an integer with
$|c_i | \leq \Upsilon_j$ for all $i \geq j+1$, and when
$\sum_{i=j+1}^{n} c_i y(i) \leq 0$.
\item If $c_j = c_j^u > 0$, then we obtain an upper bound on $x(j)$:
\begin{equation}
x(j) \; \leq \; - \frac{1}{c_j^u} \sum_{i=j+1}^{n} c_i^u x^{*} (i).
\label{eq:u}
\end{equation}
Observe that
\begin{equation}
y(j) \; \leq \; - \frac{1}{c_j^u} \sum_{i=j+1}^{n} c_i^u y(i).
\label{eq:ug}
\end{equation}
\item If $c_j = c_j^l < 0$, then we obtain a lower bound on $x(j)$:
\begin{equation}
x(j) \; \geq \; - \frac{1}{c_j^l} \sum_{i=j+1}^{n} c_i^l x^{*} (i).
\label{eq:l}
\end{equation}
Observe that
\begin{equation}
y(j) \; \geq \; - \frac{1}{c_j^l} \sum_{i=j+1}^{n} c_i^l y(i).
\label{eq:lg}
\end{equation}
\end{enumerate}
In order for this approach to lead to a valid partial solution,
we need to guarantee that all upper bounds on $x(j)$ exceed all lower
bounds on $x(j)$.  By (\ref{eq:u})-(\ref{eq:lg}) we want to establish that
\begin{equation}
- \frac{1}{c_j^u} \sum_{i=j+1}^{n} c_i^u x^{*} (i) \; \geq \;
- \frac{1}{c_j^l} \sum_{i=j+1}^{n} c_i^l x^{*} (i) \label{eq:c}
\end{equation}
when
\begin{equation}
- \frac{1}{c_j^u} \sum_{i=j+1}^{n} c_i^u y(i) \; \geq \;
- \frac{1}{c_j^l} \sum_{i=j+1}^{n} c_i^l y(i). \label{eq:gc}
\end{equation}
Constraint (\ref{eq:c}) is equivalent to the condition
\begin{equation}
\sum_{i=j+1}^{n} (c_i^l c_j^u - c_i^u c_j^l ) x^{*} (i) \leq 0.
\label{eq:c2}
\end{equation}
The property (\ref{eq:gc}) can be rewritten
\begin{equation}
\sum_{i=j+1}^{n} (c_i^l c_j^u - c_i^u c_j^l ) y(i) \leq 0.
\label{eq:gc2}
\end{equation}
Notice that since $|c_i^l | \leq \Upsilon_j$ and $|c_i^u | \leq \Upsilon_j$ for all
$i \geq j$, it follows that 
$|c_i^l c_j^u - c_i^u c_j^l | \leq 2\Upsilon_j^2 = \Upsilon_{j+1}$ for all $i \geq j+1$.
Since (\ref{eq:gc2}) holds and $X^{(n-(j+1))*} \in \Lambda_Y^{j+1}$ by assumption,
it follows that (\ref{eq:c2}) holds.

We choose $x(j)$ to be the maximum value satisfying all constraints
(\ref{eq:u}) and (\ref{eq:l}).  Since $x(j)$ may be of the form
$\sigma / \gamma_j^{*}$ for integer $1 \leq \gamma_j^{*} \leq \Upsilon_j$ and
integer $\sigma$, we multiply the partial solution by
$\gamma_j^{*}$ to obtain an integral partial solution satisfying
\begin{displaymath}
x(n) \; \leq \; \gamma_j^{*} \prod_{i=j+1}^{n-1} \Upsilon_i \; \leq \;
\prod_{i=j}^{n-1} \Upsilon_i .
\end{displaymath}

Observe that $\Lambda_Y^1 = \Lambda_Y$, so at the end of the procedure
we have a solution vector $X$ with
\begin{displaymath}
0 \; \leq \; x(1) \; \leq \; \dots \; \leq \; x(n) \; \leq \;
\prod_{i=1}^{n-1} \Upsilon_i  \; = \; \prod_{i=1}^{n-1}\frac{1}{2}(2d)^{2^{i-1}}=\frac{(2d)^{2^{n-1}-1}}{2^{n-1}}.
\end{displaymath}
Notice that, since $0 \; \leq \; y(1) \; \leq \; \dots \; \leq \; y(n)$,   we also have $0 \; \leq \; x(1) \; \leq \; \dots \; \leq \; x(n)$, because of the definition of $\Lambda_Y$. 
\end{proof}

In the following   we present an example to illustrate the procedure.

\begin{section}{An Example}
\label{sec2}
Let $\Gamma=\{X|\textbf{A}X \leq \textbf{0}\}$,  where  \textbf{A} is a $4\times 4$ matrix and it is only given that its elements  are in $\{-1,0,1\}$. In addition, it is known that  $Y=[2,3,7,29]$ is an integral solution of $\Gamma$. Now, we find  an integral  vector in $\Gamma$ such that its maximum element is 8, which satisfies the proposed bound, namely,  it is less than $\frac{(2d)^{2^{n-1}-1}}{2^{n-1}}\Big|_{d=1,n=4}=16.$\\

 Recall that $\Upsilon_{1}=1, \Upsilon_{2}=2$ and $\Upsilon_{3}=8$.  We construct a solution  $[x(1),x(2),x(3),x(4)]$ for 
$\Lambda_Y$ when $y(1)=2, y(2)=3, y(3)=7$ and $y(4)=29$ .
 
\begin{enumerate}
\item Initialize $x(4)=1$.
\item $\Lambda_Y^{3}$ is the polyhedral cone with constraints 
$c_3 x(3)+ c_4 x(4)\leq 0$, where $c_3$ and $c_4$ are integers
with $|c_3| \leq 8$, $|c_4| \leq 8$, and $c_3 y(3)+c_4 y(4)=7c_3+29c_4 \leq 0$.
Observe that the defining inequalities for $\Lambda_Y^3$ for this example are
$0\leq x(3), 4x(3)\leq x(4)$ and $5x(3)\geq x(4)$; all the other inequalities
that we consider are less restrictive.
\item Initialize $x(3)=1/4$. 
Multiply the solution by $4$ to obtain the integral partial solution $x(3)=1$ and $x(4)=4$.
\item $\Lambda_Y^{2}$ is the polyhedral cone with constraints 
$c_2 x(2) + c_3 x(3)+ c_4 x(4) \leq 0$, where $c_2, c_3$ and $c_4$ are integers
with $|c_2| \leq 2, |c_3| \leq 2, |c_4| \leq 2$, and 
$c_2 y(2) + c_3 y(3)+c_4 y(4)=3c_2+7c_3+29c_4 \leq 0$.
Observe that the defining inequalities for $\Lambda_Y^2$ for this example are
$0\leq x(2), 2x(2) \leq x(3)$ and $2x(2)+2x(3)\leq x(4).$
\item Initialize $x(2)=1/2$. 
Multiply the solution by $2$ to obtain the integral partial solution
 $x(2)=1$, $x(3)=2$ and $x(4)=8$.
\item $\Lambda_Y = \Lambda_Y^{1}$ is the polyhedral cone with constraints 
$c_1 x(1) +c_2 x(2) + c_3 x(3)+ c_4 x(4) \leq 0$, where $c_1, c_2, c_3, c_4
\in \{-1,0,1\}$, and
$c_1 y(1) + c_2 y(2) + c_3 y(3)+c_4 y(4)=2c_1+3c_2+7c_3+29c_4 \leq 0$.
Observe that the defining inequalities for $\Lambda_Y^1$ for this example are
$x(1)\geq 0, x(1)\leq x(2), x(1)+x(2)\leq x(3)$ and $x(1)+x(2)+x(3)\leq x(4)$.
\item Choose $x(1)=1, x(2)=1, x(3)=2$ and $x(4)=8$.
\end{enumerate}
The output of the algorithm is $X=[1,1,2,8]$. Thus, $X$ is another integral vector in $\Gamma$.
\end{section}
\begin{section}{Applications}
\label{sec3}
Some applications of the algorithm are as follows.
\begin{itemize}
\item An important feature of our algorithm is that we do not need to know matrix $\textbf{A}$ to produce a bounded solution for the linear program. In many situations, we have a solution that satisfies the requirements of a linear system, but we want another solution with some upper bounds on its size. It might be impossible (or very time consuming) to measure entries of matrix $\textbf{A}$. Using this method, we can have the bounded solution without knowing $\textbf{A}$. 

\item In some streaming applications, we might have the unbounded solution in advance, but matrix $\textbf{A}$ arrives later as a query, and we have to produce a bounded solution based on $\textbf{A}$. In streaming problems, we have very limited time for processing the query. In our approach, we can solve the problem without the knowledge of query in advance. When the query arrives, we already have the answer. 

\item Another interesting problem, that our bounds are useful for it, is the ellipsoid algorithm. Assume that we want to find a feasible solution for a linear program.  We have to start with an ellipsoid that contains at least one  feasible solution in the core. We then iteratively narrow down, and find a smaller ellipsoid. 
We have to start with an ellipsoid that contains some feasible non-zero integral solution of our linear program. In fact there has to be a non-zero integral point in the intersection of our polyhedral cone and the starting ellipsoid. If we start with a very small ellipsoid, we might not satisfy this property. If the starting ellipsoid is very large, we can not prove good bounds on the running time of the ellipsoid algorithm. 
In this case, we have to know some bounds on the size of the starting ellipsoid. Our algorithm can be used to get some bounds on the size of the starting ellipsoid. Our current bounds do not give polynomial bounds on the running time of ellipsoid algorithm, but we hope this approach can lead to such bounds by some polynomials. 

\item Another potential application of our algorithm is for finding the routing capacity regions of networks. It is  known that routing capacity regions of networks can be characterized using Farkas lemma as the solution set of infinite set of linear inequalities. But, our algorithm  gives  an upper bound (finite) on the set of inequalities needed to characterize the  capacity regions, \cite{serap}. 
\end{itemize}

\end{section}

\end{section}

\subsection*{Acknowledgments}
The authors are indebted to   Serap Savari  for stimulating and informative discussions to this work. They are also grateful to Katta Murty, Ramesh Saigal, and Demos Teneketzis 
whose comments significantly improved the presentation of the paper.

\end{document}


\section{A theorem}

\stmt{thrm}{sample}{Socrates is mortal.}

\refstmt{sample} can either be proven using data (the fact that
Socrates is dead), or by the proof which is provided in the appendix.

\section{Appendix}
In this appendix, the reader will find proofs of theorems not given in the text.

\stmtproof{sample}{Socrates is a man. All men are mortal.}

\rptstmtwithproof{sample}

\comment{Notice that you can put the \stmtproof{sample}{...} anywhere
you want, including right after your statement, just before using
\rptstmtwithproof, or a separate file (then use \input{proofs.tex}). That
way, neither your finished paper nor your source will be cluttered with
proofs.}